\documentclass[11pt]{article}
\usepackage{geometry}                
\geometry{letterpaper}                   
\usepackage{graphicx}
\usepackage{amssymb}
\usepackage{epstopdf}
\usepackage{amsthm}
\DeclareGraphicsRule{.tif}{png}{.png}{`convert #1 `dirname #1`/`basename #1 .tif`.png}

\newtheorem{thm}{Theorem}
\newtheorem{Q}[thm]{Question}
\newtheorem{lemma}[thm]{Lemma}
\newtheorem{corr}[thm]{Corollary}

\newtheorem{res}[thm]{Result}

\usepackage[colorlinks=true, pdfstartview=FitV, linkcolor=blue,
            citecolor=blue, urlcolor=blue]{hyperref}

\title{$s$-Overlap Cycles for Permutations}
\author{Victoria Horan\thanks{\texttt{vhoran@asu.edu}} and Glenn Hurlbert\thanks{\texttt{hurlbert@asu.edu}} \\ School of Mathematics and Statistics \\ Arizona State University \\ Tempe, AZ  85287 USA}


\begin{document}

\maketitle

\begin{abstract}

The goal of this paper is to solve Problem 481 from the list of research problems in the special issue of Discrete Mathematics dedicated to the Banff International Research Station workshop on ``Generalizations of de Bruijn Cycles and Gray Codes" in 2004.  Overlap cycles are generalizations of de Bruijn cycles and Gray codes that were introduced originally in 2010 by Godbole et al.  In this paper we prove that $s$-overlap cycles for $k$-permutations of $[n]$ exist for all $k<n$.

\end{abstract}
	
\section{Introduction}

Many applications require a set of combinatorial objects to be ordered in a specific manner.  One such ordering is an \textbf{$s$-overlap cycle}, or \textbf{$s$-ocycle}.  An $s$-ocycle is an ordering of a set of objects $\mathcal{C}$, each of which has size $n$ and is represented as a string.  The ordering requires that object $b = b_1b_2 \ldots b_n$ follow object $a = a_1a_2 \ldots a_n$ only if $a_{n-s+1}a_{n-s+2} \ldots a_n = b_1b_2 \ldots b_s$.  Ocycles were introduced by Godbole, Knisley, and Norwood in 2010 \cite{Godbole}.

For the reader familiar with universal cycles, we note that an $(n-1)$-ocycle is a universal cycle.  Universal cycles, or ucycles, were introduced in 1992 by Chung, Diaconis, and Graham \cite{UC}, and have been studied extensively ever since, for example over block designs \cite{Dewar}.  Universal cycles for permutations have long been an interesting research problem.  While ucycles using the standard permutation representation are impossible, it has been shown that when $n+1$ symbols are used to represent permutations of $[n]$, ucycles are possible \cite{HurlPerms, JohnsonPerms}.  Another alternative is to consider $k$-permutations, as done by Jackson \cite{JacksonPerms}.  Jackson proved the existence of ucycles for $k$-permutations of $[n]$ with $3 \leq k < n$.  As a natural extension to the problem of finding ucycles for permutations, we consider ocycles.  For example, Figure \ref{Fig1} gives a 3-ocycle for permutations of $[5]$, which is the largest allowable overlap for permutations of $[5]$.  Omitting repeated elements and writing it as a string, the corresponding 3-ocycle is: $$1234512341523 \cdots 34152345.$$

\begin{figure}
$$\begin{array}{l}
12345, 34512, 51234, 23415, 41523, 52314, 31425, 42513, 51324, 32415, 41532, 53214, 21435, \\ 43512, 51243, 24315, 31524, 52413, 41325, 32514, 51423, 42315, 31542, 54213, 21345, 34521, \\ 52134, 13425, 42531, 53124, 12435, 43521, 52143, 14325, 32541, 54123, 12354, 35412, 41235, \\ 23514, 51432, 43215, 21534, 53412, 41253, 25314, 31452, 45213, 21354, 35421, 42135, 13524, \\ 52431, 43125, 12534, 53421, 42153, 15324, 32451, 45132, 13245, 24513, 51342, 34215, 21543, \\ 54312, 31245, 24531, 53142, 14235, 23541, 54132, 13254, 25413, 41352, 35214, 21453, 45312, \\ 31254, 25431, 43152, 15243, 24351, 35124, 12453, 45321, 32145, 14523, 52341, 34125, 12543, \\ 54321, 32154, 15432, 43251, 25134, 13452, 45231, 23145, 14532, 53241, 24153, 15342, 34251, \\ 25143, 14352, 35241, 24135, 13542, 54231, 23154, 15423, 42351, 35142, 14253, 25341, 34152, \\ 15234, 23451, 45123
.	
\end{array}$$
\caption{List of Permutations of $[5]$ in a 3-Ocycle}\label{Fig1}
\end{figure}

In studying ocycles, it is often useful to consider the \textbf{overlap graph} for the set of objects.  This graph has objects represented as vertices.  An edge from object $a_1a_2 \ldots a_n$ to object $b_1b_2 \ldots b_n$ exists if and only if $a_{n-s+1}a_{n-s+2} \ldots a_n = b_1b_2 \ldots b_s$.  In this transition graph, we are looking for a Hamilton cycle, which will correspond to an $s$-ocycle.  In \cite{ResProbs}, the following question was posed by Anant Godbole, which we are able to answer affirmatively in Corollary \ref{main}.

\begin{Q}\label{481}
	\emph{(\cite{ResProbs}, Problem 481.)}  Let $P(k,n,s)$ be the overlap graph for $k$-permutations of $[n] = \{1, 2, \ldots , n\}$.  Is $P(k,n,s)$ hamiltonian whenever $k<n$?
\end{Q}

\section{Results}
	
We begin with a lemma from our previous paper on universal cycles for weak orders.

\begin{lemma}\label{ocmultiset}
	\emph{(\cite{UCWO}, Lemma 4.3)}  Let $n,s \in \mathbb{Z}^+$ with gcd$(n,s)=1$ and $1 \leq s \leq n-2$, and let $M$ be a multiset of size $n$.  Then there is an $s$-ocycle for all permutations of $M$.
\end{lemma}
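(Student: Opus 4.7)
The plan is to recast the lemma as an Eulerian circuit problem. Define the transition digraph $T_M$: its vertices are the $s$-tuples appearing as an $s$-prefix of some permutation of $M$, and for each permutation $\pi = \pi_1 \cdots \pi_n$ of $M$ there is a directed edge from $(\pi_1, \ldots, \pi_s)$ to $(\pi_{n-s+1}, \ldots, \pi_n)$. An Eulerian circuit of $T_M$ reads off, by overlapping consecutive edges, as an $s$-ocycle of the permutations of $M$, so the lemma reduces to verifying Euler's two conditions: balanced in/out-degrees and strong connectivity.

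The balanced-degree condition is immediate: at any vertex $u = (u_1, \ldots, u_s)$, both $\deg^+(u)$ and $\deg^-(u)$ count the orderings of the multiset $M \setminus \{u_1, \ldots, u_s\}$ used to fill the remaining $n-s$ positions, so they agree.

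The crux is strong connectivity of $T_M$, which is where both hypotheses $\gcd(n,s) = 1$ and $s \leq n-2$ enter. The first key point is that for any permutation $\pi$, the edges corresponding to the $s$-shifted rotations $\pi, \pi^{(s)}, \pi^{(2s)}, \ldots$ concatenate head-to-tail in $T_M$ (the $s$-suffix of $\pi^{(ks)}$ equals the $s$-prefix of $\pi^{((k+1)s)}$); since $\gcd(n,s) = 1$, the exponents $ks \bmod n$ cycle through all of $\{0, 1, \ldots, n-1\}$, so every cyclic rotation of $\pi$ lies in the same strongly connected component. The second key point is that any two permutations sharing an $s$-prefix yield edges leaving the same vertex of $T_M$, hence lie in the same component. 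Combining these, for any adjacent-position transposition applied to $\pi$, the condition $s \leq n-2$ guarantees a rotation placing both swapped positions outside $\{1, \ldots, s\}$; the swap then preserves the $s$-prefix and hence the component. Since adjacent transpositions generate all orderings of $M$, a single strongly connected component of $T_M$ contains every permutation, and Euler's theorem delivers the desired circuit.

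The hardest step will be making the connectivity argument airtight: verifying that a suitable rotation exists for every adjacent transposition under $s \leq n-2$ (a small modular-arithmetic check on the cyclic positions), and handling multisets with repeated elements carefully, since some rotations of $\pi$ may coincide as strings, which requires thinking of the rotation closed walk as a closed edge-walk in $T_M$ rather than as a set of distinct permutations.
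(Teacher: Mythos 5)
Your proposal is correct, but note that the paper you are working from does not actually prove this lemma --- it imports it from \cite{UCWO} (Lemma 4.3) --- so the fair comparison is with the paper's proofs of the neighbouring statements (Lemma \ref{ocmultisetsmall} and Result \ref{allkpermOC}), which use the same framework you do: vertices are $s$-prefixes/suffixes, edges are permutations, balance is immediate, and connectivity is the whole battle, with $\gcd(n,s)=1$ exploited exactly as you exploit it, via the closed walk of rotations hitting all $n$ cyclic shifts. Where you differ is in how connectivity is organized: the paper's arguments fix a ``minimum vertex'' and repeatedly repair the first position of disagreement, descending lexicographically until the target prefix is reached, whereas you reduce to adjacent transpositions as generators of all orderings of $M$ and show each single swap can be rotated into the suffix window (which is where $s\le n-2$ enters, giving a window of size $n-s\ge 2$). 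Your route is arguably cleaner for the multiset case, since ``which occurrence of a repeated letter'' never needs to be tracked, and your closing caveat --- that repeated letters can make distinct rotations coincide as strings, so the rotation cycle must be read as a closed edge-walk rather than a set of distinct permutations --- is exactly the right thing to worry about and is harmless for connectivity. Two small points to tidy when writing it up: your indexing of the rotation orbit as $\pi^{(ks)}$ is off by a sign from the head-to-tail condition (the suffix of one rotation is the prefix of the rotation shifted by $n-s$, not $s$), which is immaterial since $\gcd(n,n-s)=\gcd(n,s)$; and your individual steps establish weak connectivity, which suffices because a balanced weakly connected digraph is Eulerian (and in fact strongly connected), so you should either say that or note that the rotation walks are closed.
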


We can actually improve this and do even better for $s < \frac{n}{2}$.

\begin{lemma}\label{ocmultisetsmall}
	Let $n,s \in \mathbb{Z}^+$ with $1 \leq s < \frac{n}{2}$.  Let $M$ be a multiset of size $n$.  Define the set $A$ to be the set of all permutations of $M$.  Then there is an $s$-ocycle for all permutations of $A$.
\end{lemma}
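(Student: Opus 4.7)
My plan is to reduce to Lemma \ref{ocmultiset} by shifting the overlap parameter upward to a value coprime to $n$, and to handle a single exceptional pair $(n,s)$ by an auxiliary-graph Eulerian-circuit argument. The key observation is that whenever $s \le s' \le n-2$ and one has an $s'$-ocycle on the permutations of $M$, the same cyclic sequence is automatically an $s$-ocycle, since ``the last $s'$ characters of one word equal the first $s'$ of the next'' implies the same statement with $s$ in place of $s'$. Thus, to prove the lemma for a given $(n,s)$, it suffices to exhibit some $s' \in [s,n-2]$ with $\gcd(n,s')=1$ and then invoke Lemma \ref{ocmultiset}.

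Next I will check that such an $s'$ exists whenever $1 \le s < n/2$, with the sole exception $(n,s)=(6,2)$. If $n$ is odd then $s'=n-2$ works since $\gcd(n,n-2)=\gcd(n,2)=1$; if $n$ is even with $3\nmid n$ then $s'=n-3$ works; if $6\mid n$ but $5\nmid n$ (so $n\ge 12$) then $s'=n-5$ works; and so on. More generally, letting $p$ denote the smallest prime not dividing $n$, the choice $s'=n-p$ is coprime to $n$ and lies in $[s,n-2]$ as long as $p\le n-s$; since $s<n/2$, this reduces to verifying $p\le n/2$, which a short case check handles for every $n$ except $n=6$.

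For the exceptional pair $(n,s)=(6,2)$, I plan to construct the auxiliary directed multigraph $B_2(M)$ whose vertices are the $2$-strings $\sigma$ whose multiset is contained in $M$, and in which every permutation $p=p_1 p_2 \cdots p_6$ of $M$ contributes an edge from $p_1 p_2$ to $p_5 p_6$; an Eulerian circuit of $B_2(M)$ reads off the desired $2$-ocycle. Balance is immediate, since the indegree and outdegree at $\sigma$ both count the arrangements of $M\setminus\sigma$. For connectivity, given any two vertices $\sigma,\tau$, either $\sigma+\tau\le M$ as multisets (giving a direct edge), or any size-$2$ multiset $\rho$ with $\rho_i \le m_i - \max(\sigma_i,\tau_i)$ for every $i$ yields a walk $\sigma\to\rho\to\tau$; such a $\rho$ exists because $\sum_i(m_i-\max(\sigma_i,\tau_i))\ge 6-2s=2=|\rho|$, which is precisely where the hypothesis $s<n/2$ is used. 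The main obstacle is the elementary but slightly delicate arithmetic verification that $(6,2)$ really is the unique exception to the shift argument; the direct construction for that lone pair is then short and uniform in $M$.
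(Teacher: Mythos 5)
The reduction at the heart of your argument is false: an $s'$-ocycle is \emph{not} automatically an $s$-ocycle for $s<s'$. If $b$ follows $a$ with overlap $s'$, then $a_{n-s'+j}=b_j$ for $1\le j\le s'$, so the last $s$ letters of $a$ are $a_{n-s+1}\cdots a_n=b_{s'-s+1}\cdots b_{s'}$, an \emph{internal} segment of $b$ rather than its $s$-prefix; the two agree only when $b$ happens to satisfy $b_{s'-s+j}=b_j$. The paper's own Figure \ref{Fig1} is a counterexample: in that $3$-ocycle for permutations of $[5]$, the word $34512$ follows $12345$ (overlap $345$), but the last two letters of $12345$ are $45$ while the first two of $34512$ are $34$, so the same ordering is not a $2$-ocycle. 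Since your proof disposes of every pair $(n,s)$ except $(6,2)$ by choosing $s'\in[s,n-2]$ coprime to $n$ and invoking Lemma \ref{ocmultiset}, the argument collapses in essentially all cases; the coprimality arithmetic, though correct, is beside the point.

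What survives is your direct construction for $(6,2)$: the prefix/suffix transition multigraph with permutations as edges, balance because in- and out-degree at $\sigma$ both count arrangements of $M\setminus\sigma$, and an Euler tour reading off the ocycle. That is precisely the paper's strategy, and the repair is to run it for \emph{all} $(n,s)$ with $s<n/2$ rather than for a single ``exceptional'' pair. One caution if you do so: your two-step connectivity argument needs $\sum_i\bigl(m_i-\max(\sigma_i,\tau_i)\bigr)=n-2s+|\sigma\cap\tau|\ge s$ whenever no direct edge exists, which is automatic for $n\ge 3s-1$ but can fail otherwise (e.g.\ $n=7$, $s=3$, $M=\{x,a,b,c,d,e,f\}$, $\sigma=xab$, $\tau=xcd$ requires a path of length three). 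This is why the paper instead walks to a fixed minimum vertex by correcting the prefix one coordinate at a time, using $s<n/2$ to guarantee room to transpose or replace a letter. As written, your proposal does not prove the lemma.
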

\begin{proof}
	Construct the transition graph with vertices as $s$-prefixes and $s$-suffixes of words in $A$, and edges representing the words themselves.  Fix an arbitrary vertex $v^{s-}=v_1v_2 \ldots v_s$ as the minimum vertex.  To prove the existence of an Euler tour, and thus prove the existence of an $s$-ocycle, we will show that from any vertex $w^{s-}=w_1w_2 \ldots w_s$, we can find a path to the minimum vertex.

Compare $v^{s-}$ and $w^{s-}$, and consider the first position in which they differ, say index $i$.  In other words, $v_i \neq w_i$, and for all $1 \leq j < i$ we have $v_j = w_j$.  We will choose any string $w=w_1w_2 \ldots w_n \in A$ so that $w$ has $s$-prefix $w^{s-}$.  We have two cases.

\begin{enumerate}
    \item  First, if $v_i \in \{w_{i+1}, w_{i+2}, \ldots , w_s\}$, then we use the following undirected path, in which we merely transpose letters $w_i$ and $v_i$ in $w^{s-}$.
    \begin{eqnarray*}
        w_1w_2 \ldots w_{i-1}w_iw_{i+1} \ldots v_i \ldots w_s & \rightarrow & w_{n-s+1}w_{n-s+2} \ldots w_n \\
        & \leftarrow & w_1w_2 \ldots w_{i-1} v_i w_{i+1} \ldots w_i \ldots w_s
    \end{eqnarray*}
    \item  Second, if $v_i \in \{w_{s+1}, w_{s+2}, \ldots , w_n\}$, then $v_i$ does not appear in vertex $w^{s-}$.  Note that vertex $w^{s-}=w_1w_2 \ldots w_s$ is connected to any $s$-suffix, which consists of all $s$-permutations of the set $$B=M \setminus \{w_1, w_2, \ldots , w_s\}.$$  Thus $v_i \in B$, so we may choose an edge leaving $w^{s-}$ that leads to an $s$-suffix not containing $v_i$, i.e. $v_i$ does not appear in either vertex.  In this case, we may simply replace $w_i$ with $v_i$ as shown in the following path.  
        \begin{eqnarray*}
            w_1w_2 \ldots w_{i-1}w_iw_{i+1} \ldots w_s & \rightarrow & w_{n-s+1}w_{n-s+2} \ldots w_n \\
            & \leftarrow & w_1 w_2 \ldots w_{i-1} v_i w_{i+1} \ldots w_s
        \end{eqnarray*}
\end{enumerate}
At this point, we are one step closer to the minimum vertex, as now the two vertices agree in the first $i$ positions.  Repeating, we will eventually find a path to the minimum vertex.  Thus, the graph is connected.  Finally, since clearly any $s$-suffix of a string in $A$ is also an $s$-prefix, the graph is balanced and hence eulerian.
\end{proof}

The previous lemmas immediately give us the following partial solution when we consider $s$-ocycles on permutations.

\begin{res}\label{betterMSet}
	Let $n,s \in \mathbb{Z}^+$ with $n \geq 2$.  If either (1) $1 \leq s < \frac{n}{2}$, or (2) gcd$(s,n)=1$ with $\frac{n}{2} \leq s <n-1$, then there exists an $s$-ocycle on the set of permutations of $[n]$.
\end{res}
\begin{proof}
	This is merely a concise restatement of Lemmas \ref{ocmultiset} and \ref{ocmultisetsmall} with $M = \{1, 2, \ldots , n\}$.
\end{proof}	

This result gives us an alternative modification for dealing with the problem of finding ucycles for permutations.  Instead of increasing the alphabet size from $n$ to $n+1$ as done in \cite{HurlPerms, JohnsonPerms}, an alternative could be to consider the largest possible overlap, with $s=n-2$ being the best alternative to a ucycle.  Note that this result also shows that under the given conditions, all permutations of an $[n]$-set are connected within a larger transition graph.  We can extend the ocycle result to $k$-permutations of $[n]$ to partially answer Question \ref{481}.

\begin{res}\label{kpermOC}
	Let $n,s,k \in \mathbb{Z}^+$ with $1\leq k<n$.  If either (1) $1 \leq s < \frac{k}{2}$, or (2) gcd$(s,k)=1$ with $\frac{k}{2} \leq s < k-1$, then there exists an $s$-ocycle on the set of $k$-permutations of $[n]$.
\end{res}
\begin{proof}
	First, construct the transition graph with vertices of length $s$ ($s$-prefixes of $k$-permutations) and edges representing $k$-permutations.  We allow an edge from vertex $u$ to vertex $v$ if and only if $u$ is an $s$-prefix and $v$ is an $s$-suffix for some $k$-permutation.  We will show that this graph is balanced and weakly connected, and thus is eulerian.  From an Euler tour, we can find an $s$-ocycle.
	
	Define the minimum $k$-permutation $v = 123 \ldots k$, and the minimum vertex, $v^{s-}$, in the transition graph to be the $s$-prefix of $v$.  Let $w^{s-} = w_1 w_2 \ldots w_s$ be the prefix of an arbitrary $k$-permutation $w_1 w_2 \ldots w_k$.  By Result \ref{betterMSet}, all permutations of a $k$-set are connected under our hypthoses, so we may assume that $w$ is ordered as $w_1 < w_2 < \cdots < w_k$.  We will show that there is a path in the graph from $w^{s-}$ to $v^{s-}$.  Define $D = \{w_1, w_2, \ldots , w_k\} \setminus \{1,2,3, \ldots , k\}$ and $\overline{D} = \{1, 2, 3, \ldots , k\} \setminus \{w_1, w_2, \ldots , w_k\}$.
	
	Note that if $D = \emptyset$, then $w$ is a permutation of $v$ and so by the comments following Result \ref{betterMSet}, we know that there exists a path.  For $D \neq \emptyset$, we choose $d$ letters $a_1, a_2, \ldots , a_d \in \overline{D}$, where $d = \min\{k-s,|\overline{D}|\}$.  If $d < k-s$, then we may also select letters $a_{d+1}, a_{d+2}, \ldots , a_{k-s} \in \{w_{s+1}, w_{s+2}, \ldots , w_k\}$.  Now in our graph we follow the edge corresponding to the $k$-permutation $w_1w_2 \ldots w_s a_1 a_2 \ldots a_{k-s}$.  By following this edge, we have found a $k$-permutation with more letters in common with $v$ than $w$ did.  Since Result \ref{betterMSet} implies that all permutations of a $k$-set are connected in the transition graph, we may arrange this string in increasing order, and by repeating this procedure we will eventually find a $k$-permutation that is simply a permutation of $v$, at which point we are done.
	
	Since we have shown that the graph is connected, we need only show that the graph is balanced in order to prove that an Euler tour exists.  However it is clear that the graph is balanced, as any prefix of a $k$-permutation is also a suffix of a $k$-permutation.
\end{proof}

We now prove our main result, which will provide a complete solution to Question \ref{481}.

\begin{res}\label{allkpermOC}
	For all $n,s,k \in \mathbb{Z}^+$ with $1 \leq s < k < n$, there is an $s$-ocycle for $k$-permutations of $[n]$.
\end{res}
\begin{proof}
	We construct the standard transition graph $G$ where vertices of length $s$ correspond to $s$-prefixes of $k$-permutations of $[n]$, and edges correspond to $k$-permutations of $[n]$.  If we can show that this graph is eulerian, then we have shown that there exists an $s$-ocycle for $k$-permutations of $[n]$.  First, we note that since any $s$-prefix of a $k$-permutation is also an $s$-suffix of a $k$-permutation, the graph is balanced.  All that remains is to show that the graph is connected.
	
	Define the minimum vertex $v^{s-} = 12 \ldots s$, and let $w^{s-} = w_1w_2 \ldots w_s$ be an arbitrary vertex in the graph, which we assume to be an $s$-prefix of the $k$-permutation $w = w_1 w_2 \ldots w_k$.  We will frequently refer to \textbf{rotations} of a vertex.  This is defined as following the edges of the cycle corresponding to rotations of a $k$-permutation that the vertex is an $s$-prefix for.
	
	We next compare $w^{s-}$ with $v^{s-}$.  Let $i$ be the first index in which $w_i \neq v_i$ and let $g = \hbox{gcd}(s,k)$.  If $g=1$, we are done by Result \ref{kpermOC}.  Otherwise, we note that rotations of $w$ partition the string into blocks of length $g$.  All addition in indices will be modulo $k$.  We have two cases.
	\begin{description}
		\item[Case 1:]  If $i \not \in \{w_{i+1}, w_{i+2}, \ldots , w_k\}$:
		
			Rotate $w$ so that $w_i$ is in the first block.  This means that we are considering some vertex $$w_{i-t} w_{i-t+1} \ldots w_{i-t+s-1}$$ with $t \leq g-1$, or $i \in [i-t,i-t+g-1]$.  Follow the edge out of this vertex that corresponds to a rotation of $w$.  This takes us to the vertex $$w_{i-t+k-s} w_{i-t+k-s+1} \ldots w_{i-t+k-1}.$$  Next we follow the backwards edge corresponding to the $k$-permutation $$w_{i-t}w_{i-t+1} \ldots w_{i-1} (i) w_{i+1} \ldots w_{i-t+k-1}.$$  Now we are at the vertex $$w_{i-t}w_{i-t+1} \ldots w_{i-1} (i) w_{i+1} \ldots w_{i-t+s-1}.$$  Finally we follow rotations of this vertex to end at the vertex $$12 \ldots (i) w_{i+1} w_{i+2} \ldots s.$$  This vertex is closer to the minimum vertex since the first $i$ letters agree.  Repeating this procedure, we will eventually arrive at the minimum vertex.
		\item[Case 2:]  If $i \in \{w_{i+1}, w_{i+2}, \ldots , w_k\}$:
		
		Rotate $w$ so that $i$ is in the first block.  We are now considering some vertex $$a_1a_2 \ldots a_s$$ with $i \in \{a_1, a_2, \ldots , a_g\}$, so assume that $i = a_j$.  Note that $$|[n] \setminus \{w_1, w_2, \ldots , w_k\}| > 1,$$ so choose some $x$ in this set.  We follow the edge from $a_1 a_2 \ldots a_s$ corresponding to a rotation of $w$ $$a_1 a_2 \ldots a_{j-1} (i) a_{j+1} \ldots a_k.$$  This takes us to the vertex $$a_{k-s+1} a_{k-s+2} \ldots a_k,$$ which does not contain the letter $i$.  From this vertex we follow the backward edge $$a_1 a_2 \ldots a_{j-1} (x) a_{j+1} \ldots a_k.$$  Note that $i$ does not appear in this edge, so we can go to Case (1).
	\end{description}
	
	When we have finished, we will have arrived at the minimum vertex.  Thus the graph is weakly connected, and so is eulerian.
\end{proof}

Finally, the following answer to Question \ref{481} is an obvious Corollary to Result \ref{allkpermOC}.

\begin{corr}\label{main}
	The permutation overlap graph $P(k,n,s)$ is hamiltonian whenever $k<n$.
\end{corr}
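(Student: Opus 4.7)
The plan is essentially trivial, so the task reduces to verifying that the terminology lines up. First, I would unpack the definition of $P(k,n,s)$ from the introduction: its vertices are the $k$-permutations of $[n]$, and there is a directed edge from $a_1a_2 \ldots a_k$ to $b_1b_2 \ldots b_k$ exactly when $a_{k-s+1}\ldots a_k = b_1 \ldots b_s$. Under this dictionary, a Hamilton cycle in $P(k,n,s)$ is the same combinatorial object as an $s$-ocycle on the set of $k$-permutations of $[n]$: each permutation is listed once, and consecutive permutations share the required $s$-overlap. Conversely, any such $s$-ocycle gives a Hamilton cycle in $P(k,n,s)$.

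Second, I would simply invoke Result \ref{allkpermOC}, which guarantees the existence of an $s$-ocycle for $k$-permutations of $[n]$ whenever $1 \leq s < k < n$. Combined with the correspondence above, this immediately produces the desired Hamilton cycle in $P(k,n,s)$ for every meaningful choice of the overlap parameter, namely $1 \leq s \leq k-1$ (larger values of $s$ make the overlap condition either trivial or undefined, so they fall outside the scope of the question).

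There is no genuine obstacle here; all the real combinatorial work has already been carried out in Result \ref{allkpermOC}, where the rotation-and-transposition arguments reduce an arbitrary $s$-prefix to the minimum one through the $\gcd(s,k)$-block structure of rotations. The corollary is a pure restatement of that result in the graph-theoretic language in which Question \ref{481} was originally phrased, so its proof is just one sentence invoking the result together with the equivalence between $s$-ocycles and Hamilton cycles in the overlap graph.
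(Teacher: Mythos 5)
Your proposal is correct and matches the paper exactly: the paper states this corollary without a separate proof, treating it as an immediate consequence of Result \ref{allkpermOC} via the standard identification of $s$-ocycles with Hamilton cycles in the overlap graph $P(k,n,s)$. Your unpacking of that identification is precisely the intended (one-line) argument.
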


\end{document}